\numberwithin{equation}{section}
\newtheorem{thm}{Theorem}[section]
\newtheorem{lem}{Lemma}[section]
\newtheorem{cor}{Corollary}[section]
\theoremstyle{definition}
\newtheorem{dfn}{Definition}[section]
\theoremstyle{remark}
\newtheorem{rem}{Remark}
\newcommand{\ppt}[1]{\frac{\partial {#1}}{\partial t}}
\begin{document}

\title[Parabolic Omori-Yau Maximum Principles for MCF]{Parabolic Omori-Yau maximum principle for mean curvature flow and some applications}

\author{John Man Shun Ma}

\address{Department of Mathematics,
The University of British Columbia, Vancouver, BC 
Canada V6T1Z2}
\email{johnma@math.ubc.edu.ca}

\begin{abstract}
We derive a parabolic version of Omori-Yau maximum principle for a proper mean curvature flow when the ambient space has lower bound on $\ell$-sectional curvature. We apply this to show that the image of Gauss map is preserved under a proper mean curvature flow in euclidean spaces with uniform bounded second fundamental forms. This generalizes the result of Wang \cite{Wang} for compact immersions. We also prove a Omori-Yau maximum principle for properly immersed self-shrinkers, which improves a result in \cite{CJQ}.
\end{abstract}

\date{\today}

\maketitle
\section{Introduction}
Let $(M, g)$ be a Riemannian manifold and let $u: M \to \mathbb R$ be a twice differentiable function. If $M$ is compact, $u$ is maximized at some point $x\in M$. At this point, basic advanced calculus implies
$$ u(x) = \sup u, \ \ \ \nabla ^M u(x) = 0, \ \ \ \Delta^M u (x) \le 0.$$
Here $\nabla ^M$ and $\Delta^M$ are respectively the gradient and Laplace operator with respect to the metric $g$. When $M$ is noncompact, a bounded function might not attain a maximum. In this situation, Omori \cite{Omori} and later Yau \cite{Yau} provide some noncompact versions of maximum principles. We recall the statement in \cite{Yau}:
\begin{thm} \label{OY Yau version}
Let $(M,g)$ be a complete noncompact Riemannian manifold with bounded below Ricci curvature. Let $u: M\to \mathbb R$ be a bounded above twice differentiable function. Then there is a sequence $\{x_i\}$ in $M$ such that
$$u(x_i) \to \sup u, \ \ \ |\nabla u|(x_i) \to 0, \ \ \ \limsup_{i\to \infty} \Delta^M u (x_i) \le 0. $$
\end{thm}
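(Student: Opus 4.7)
The plan is the standard penalty-function approach: add a slowly-growing proper penalty to $u$ to produce a global maximum, then read off the three required estimates via the first and second derivative tests, using the Ricci lower bound only through the Laplacian comparison theorem.

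Fix $o\in M$, set $r(x)=d(o,x)$, and take the penalty $f(x)=\sqrt{1+r(x)^2}$. Writing $\phi(t)=\sqrt{1+t^2}$, the chain rule gives $|\nabla f|=\phi'(r)|\nabla r|\le 1$ and $\Delta f=\phi''(r)+\phi'(r)\Delta r$ in the barrier sense away from $o$. Under $\mathrm{Ric}\ge -(n-1)K$ the Laplacian comparison theorem yields $\Delta r\le (n-1)\sqrt{K}\coth(\sqrt{K}r)$ in the barrier sense, and the product $\phi'(r)\coth(\sqrt{K}r)$ is uniformly bounded for $r\ge 0$ (the vanishing of $\phi'(r)$ at $0$ absorbs the $1/r$ singularity of $\coth(\sqrt{K}r)$ there, and both factors are bounded at infinity). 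Hence $\Delta f\le C=C(n,K)$ in the barrier sense on all of $M$. For each $\epsilon>0$ the function $u_\epsilon:=u-\epsilon f$ is bounded above and $f$ is proper, so $u_\epsilon$ attains its global supremum at some $x_\epsilon\in M$.

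If $x_\epsilon$ lies in the cut locus of $o$, I would invoke Calabi's trick: shift the basepoint a small distance $\delta>0$ along a minimizing geodesic from $o$ to $x_\epsilon$ to a new point $o'$, and set $\tilde r(x):=\delta+d(o',x)$. Then $\tilde r\ge r$ with equality at $x_\epsilon$, $\tilde r$ is smooth near $x_\epsilon$ (since $x_\epsilon$ is no longer a cut point of $o'$), and $\tilde f:=\phi\circ\tilde r$ serves as a smooth upper barrier obeying the same Laplacian estimate up to an $o(1)$ error as $\delta\to 0$. Applying the first and second derivative tests to $u-\epsilon\tilde f$ at its interior maximum $x_\epsilon$ yields
\[
|\nabla u|(x_\epsilon)\le \epsilon, \qquad \Delta u(x_\epsilon)\le \epsilon C.
\]
The convergence $u(x_\epsilon)\to \sup u$ follows by rearranging $u_\epsilon(x_\epsilon)\ge u_\epsilon(y)$ into $u(x_\epsilon)\ge u(y)-\epsilon f(y)$, sending $\epsilon\to 0$ and then $u(y)\to \sup u$. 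Setting $x_i:=x_{1/i}$ produces the required sequence. The main technical obstacle is the non-smoothness of $r$ on the cut locus, which is precisely what Calabi's trick is built to handle; once that is done, everything reduces to the chain rule, standard calculus at an interior maximum, and the comparison bound.
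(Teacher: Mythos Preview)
The paper does not actually prove this theorem: it is quoted verbatim from \cite{Yau} as background in the introduction, with no argument supplied. So there is nothing in the paper to compare your proof against for this particular statement.

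That said, your argument is correct and is exactly the classical penalty-function proof. It is also in the same spirit as the paper's proof of its own main result, Theorem~\ref{parabolic OY}: there the authors penalise by $\epsilon_i \rho^2$ (with $\rho$ the extrinsic distance pulled back by the immersion), maximise, and read off the estimates, invoking Calabi's trick at the end for cut-locus points. Your choice of $\sqrt{1+r^2}$ instead of $r^2$ is a cosmetic difference that absorbs the $1/r$ singularity of $\Delta r$ near the basepoint and keeps $|\nabla f|\le 1$ uniformly; the paper can use the raw $\rho^2$ because it only needs $\epsilon_i\rho(x_i,t_i)\to 0$ rather than a uniform gradient bound on the penalty. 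Both routes are standard and essentially equivalent here.

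One small point worth tightening in your write-up: when you apply Calabi's trick you should note explicitly that $x_\epsilon$ is still a global maximum of $u-\epsilon\tilde f$ (since $\tilde f\ge f$ with equality at $x_\epsilon$), which is what licenses the first- and second-derivative tests there.
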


Maximum principles of this form are called Omori-Yau maximum principles. The assumption on the lower bound on Ricci curvature in Theorem \ref{OY Yau version} has been weaken in (e.g.) \cite{CX}, \cite{PRS}. On the other hand, various Omori-Yau type maximum principles have been proved for other elliptic operators and on solitons in geometric flows, such as Ricci solition \cite{CJQ} and self-shrinkers in mean curvature flows \cite{CP}. The Omori-Yau maximum principles are powerful tools in studying noncompact manifolds and have a lot of geometric applications. We refer the reader to the book \cite{AMR} and the reference therein for more information.

In this paper, we derive the following parabolic version of Omori-Yau maximum principle for mean curvature flow.

\begin{thm}  [Parabolic Omori-Yau Maximum Principle]\label{parabolic OY}
Let $n\ge 2$ and $m\ge 1$. Let $(\overline M^{n+m}, \bar g)$ be a $n+m$-dimensional noncompact complete Riemannian manifold such that the $(n-1)$-sectional curvature of $\overline M$ is bounded below by $-C$ for some positive constant $C$.  Let $M^n$ be a $n$-dimensional noncompact manifold and let $F : M^n \times [0,T]\to \overline M$ be a proper mean curvature flow. Let $u: M\times [0,T] \to \mathbb R$ be a continuous function which satisfies
\begin{enumerate}
\item $\sup_{(x, t)\in M \times [0,T]} u > \sup_{x\in M} u(\cdot, 0)$,
\item $u$ is twice differentiable in $M \times (0,T]$, and
\item (sublinear growth condition) There are $B>0$, $\alpha\in [0,1)$ and some $y_0\in \overline M$ so that
\begin{equation}
u(x, t) \le B (1+d_{\overline M} (y_0, F(x, t))^\alpha),\ \ \ \forall (x, t) \in M\times [0,T].
\end{equation}
\end{enumerate}
Then there is a sequence of points $(x_i, t_i) \in M \times (0,T]$ so that
\begin{equation}
 u(x_i, t_i) \to \sup u,\ \ |\nabla^{M_{t_i}} u (x_i, t_i)| \to  0,\ \ \ \liminf_{i\to \infty}\left(\ppt{} - \Delta^{M_{t_i}}\right) u (x_i, t_i) \ge 0 .
\end{equation}
\end{thm}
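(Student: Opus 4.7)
The plan is to run a standard barrier/perturbation argument adapted to the parabolic, submanifold setting. Let $r(x,t) = d_{\overline M}(y_0, F(x,t))$ and set $\rho(x,t) = \sqrt{1+r(x,t)^2}$. For $\epsilon > 0$ small, I will study the perturbation $u_\epsilon = u - \epsilon\rho$; since $\alpha < 1$, the sublinear growth (3) forces $u_\epsilon(x,t) \to -\infty$ as $r(x,t) \to \infty$. Because $F$ is proper, the sublevel sets $\{r \le R\}$ are compact in $M \times [0,T]$, so $u_\epsilon$ attains its supremum at some point $(x_\epsilon, t_\epsilon) \in M \times [0,T]$. Hypothesis (1) is exactly what is needed to force $t_\epsilon > 0$ once $\epsilon$ is small: picking $(x^*,t^*)$ with $u(x^*,t^*) > \sup_M u(\cdot,0) + \tfrac12\bigl(\sup u - \sup_M u(\cdot,0)\bigr)$ and taking $\epsilon$ small enough that $\epsilon\rho(x^*,t^*)$ is less than $\tfrac12\bigl(\sup u - \sup_M u(\cdot,0)\bigr)$ ensures $\sup u_\epsilon > \sup_M u_\epsilon(\cdot,0)$, so the maximum cannot sit on the initial slice.

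The technical heart of the argument is a lower bound on $(\partial_t - \Delta^{M_t})\rho$ depending only on $C$, $n$, $y_0$. For this I will use the evolution $\partial_t F = \vec H$ and the submanifold Laplacian identity $\Delta^{M_t} f = \operatorname{tr}_{TM_t}\overline\nabla^2 f + \langle \overline\nabla f, \vec H\rangle$ applied to $f = r$. The mean curvature terms cancel and one obtains the clean identity
\begin{equation*}
\Bigl(\ppt{} - \Delta^{M_t}\Bigr) r \;=\; -\operatorname{tr}_{TM_t}\overline\nabla^2 r,
\end{equation*}
valid away from the cut locus of $y_0$. Hessian comparison under the $(n-1)$-sectional curvature lower bound gives $\operatorname{tr}_{TM_t}\overline\nabla^2 r \le \sqrt C\,\coth(\sqrt C\, r)$ times an $(n-1)$-dependent constant, because the trace is taken over the projection of $TM_t$ into $\partial_r^\perp$, which has dimension at most $n-1$ plus a radial summand that contributes zero to $\overline\nabla^2 r$. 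Chaining this with the computation
\begin{equation*}
\Bigl(\ppt{} - \Delta^{M_t}\Bigr)\rho \;=\; \frac{r}{\rho}\Bigl(\ppt{} - \Delta^{M_t}\Bigr)r - \frac{|\nabla^{M_t} r|^2}{\rho^3}
\end{equation*}
and using that $\sqrt C\, r\coth(\sqrt C\, r)/\rho$ is uniformly bounded on $[0,\infty)$ yields $(\partial_t - \Delta^{M_t})\rho \ge -C'$ for a constant $C' = C'(n,C)$. The cut locus is handled by the standard Calabi trick of replacing $r$ by $r + \eta$ for a smooth modification and letting $\eta \to 0$, or by working with upper/lower barriers in the viscosity sense.

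With the barrier in hand, at the interior maximum $(x_\epsilon, t_\epsilon)$ we have $\nabla^{M_{t_\epsilon}}(u - \epsilon\rho) = 0$, $\Delta^{M_{t_\epsilon}}(u-\epsilon\rho) \le 0$, and $\partial_t(u-\epsilon\rho) \ge 0$ (strictly $0$ if $t_\epsilon < T$, possibly positive if $t_\epsilon = T$). Combining these yields
\begin{equation*}
|\nabla^{M_{t_\epsilon}} u|(x_\epsilon,t_\epsilon) = \epsilon\,|\nabla^{M_{t_\epsilon}}\rho| \le \epsilon,
\qquad
\Bigl(\ppt{} - \Delta^{M_{t_\epsilon}}\Bigr)u(x_\epsilon,t_\epsilon) \;\ge\; \epsilon\,(\partial_t - \Delta^{M_{t_\epsilon}})\rho \;\ge\; -C'\epsilon.
\end{equation*}
A standard comparison shows $\sup u_\epsilon \to \sup u$ as $\epsilon \to 0$, whence $u(x_\epsilon,t_\epsilon) \ge \sup u_\epsilon \to \sup u$. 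Choosing $\epsilon_i \to 0$ and setting $(x_i,t_i) = (x_{\epsilon_i}, t_{\epsilon_i})$ gives the claimed sequence.

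The main obstacle will be the Hessian comparison step with only the $(n-1)$-sectional (as opposed to sectional) curvature bound, together with cleanly handling the cut locus of $y_0$ in the parabolic setting; the attainment of the sup and the barrier bookkeeping are routine once these are settled.
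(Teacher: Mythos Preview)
Your overall architecture matches the paper's: perturb $u$ by a barrier built from the ambient distance $r=d_{\overline M}(y_0,F)$, use properness together with the sublinear growth to force an interior-in-time maximum, and read off the conclusion there. Your barrier $\rho=\sqrt{1+r^2}$ is in fact a bit tidier than the paper's choice $r^2$: since $|\nabla^{M_t}\rho|\le 1$ and, once the Hessian estimate is in hand, $(\partial_t-\Delta^{M_t})\rho\ge -C'$ uniformly, you get $|\nabla^{M_{t_\epsilon}}u|\le\epsilon$ and $(\partial_t-\Delta)u\ge -C'\epsilon$ directly, bypassing the Young's-inequality step the paper uses to show $\epsilon_i\rho(x_i,t_i)\to 0$.

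The genuine gap is in your Hessian comparison. You claim that ``the trace is taken over the projection of $TM_t$ into $\partial_r^\perp$, which has dimension at most $n-1$ plus a radial summand that contributes zero.'' That is false precisely when $\partial_r$ is normal to the slice: then $TM_t\subset\partial_r^\perp$ already, the projection is $n$-dimensional, and there is no radial piece to discard. The Li--Wang comparison (Theorem~\ref{comp thm of LW}) only controls the trace of $\overline\nabla^2 r$ over $(n-1)$-planes orthogonal to $\partial_r$, so your argument does not bound an $n$-dimensional trace. The paper supplies the missing idea in Lemma~\ref{lemma in main thm}: when $\partial_r\perp T_xM_t$ one rewrites
\[
\sum_{i=1}^n \overline\nabla^2 r(e_i,e_i)=\frac{1}{n-1}\sum_{j=1}^n\sum_{i\ne j}\overline\nabla^2 r(e_i,e_i)
\]
and applies the $(n-1)$-plane comparison to each inner sum; when $\partial_r$ has a nonzero tangential part one chooses $e_1\parallel(\partial_r)^\top$, uses $\overline\nabla^2 r(e_1,e_1)=\overline\nabla^2 r(e_1^\perp,e_1^\perp)$ with $\|e_1^\perp\|\le 1$, and interpolates between an $n$-plane and an $(n-1)$-plane trace. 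Without this averaging/interpolation step your bound on $\operatorname{tr}_{TM_t}\overline\nabla^2 r$, and hence the claimed inequality $(\partial_t-\Delta^{M_t})\rho\ge -C'$, is not established.
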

We remark that the above theorem makes no assumption on the curvature of the immersion $F_t$. See section 2 for the definition of $\ell$-sectional curvature.

With this parabolic Omori-Yau maximum principle, we derive the following results.

In \cite{Wang}, the author studies the gauss map along the mean curvature flow in the euclidean space. He shows that if the image of the gauss map stays inside a geodesic submanifold in the Grassmanians, the same is also true along the flow when the initial immersion is compact. As a first application, we extend Wang's theorem to the noncompact situation.

\begin{thm} \label{main theorem}
Let $F_0: M^n \to \mathbb R^{n+m}$ be a proper immersion and let $F :M^n \times [0,T] \to \mathbb R^{n+m}$ be a mean curvature flow of $F_0$ with uniformly bounded second fundamental form. Let $\Sigma$ be a compact totally geodesic submanifold of the Grassmanians of $n$-planes in $\mathbb R^{n+m}$. If the image of the Gauss map $\gamma$ satisfies $\gamma (\cdot,0) \subset \Sigma$, then $\gamma (\cdot, t) \subset \Sigma$ for all $t\in [0,T]$.
\end{thm}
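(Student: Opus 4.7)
The plan is to reduce the theorem to an application of Theorem \ref{parabolic OY}, essentially extending the compact argument of Wang \cite{Wang} to the noncompact setting. Since the ambient Euclidean space is flat, the $(n-1)$-sectional curvature hypothesis of Theorem \ref{parabolic OY} holds automatically; and since $|\partial_t F| = |H| \le \sqrt{n}\,\|A\|_\infty$ is uniformly bounded while $F_0$ is proper, the spacetime map $F : M \times [0,T] \to \mathbb R^{n+m}$ is itself proper, so Theorem \ref{parabolic OY} is applicable to the flow.

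On the Grassmannian of $n$-planes in $\mathbb R^{n+m}$, I would fix a tubular neighborhood $U$ of $\Sigma$ on which the squared distance $d_\Sigma^2$ is smooth, and choose a bounded smooth extension $\rho \ge 0$ of $d_\Sigma^2|_U$ to the entire Grassmannian that vanishes exactly on $\Sigma$. Setting $u(x,t) = \rho(\gamma(x,t))$, the Ruh--Vilms type heat equation satisfied by $\gamma$ along MCF, combined with the fact that $\Sigma$ is totally geodesic, should yield—via exactly the tensor computation in \cite{Wang}—an inequality
\begin{equation*}
\left(\ppt{} - \Delta^{M_t}\right) u \le K u
\end{equation*}
whenever $\gamma(x,t) \in U$, with $K$ depending only on $\Sigma$, on $\rho$, and on $\|A\|_\infty$.

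With this in place, I would let $t_*$ be the supremum of $\tau \in [0,T]$ for which $\gamma(M \times [0,\tau]) \subset U$; the uniform bound on $|A|$ controls $|\partial_t \gamma|$ uniformly, so $t_* > 0$. On $M \times [0, t_*)$ introduce $v(x,t) = e^{-(K+1)t} u(x,t)$, which is continuous, bounded, smooth for $t > 0$, and satisfies
\begin{equation*}
\left(\ppt{} - \Delta^{M_t}\right) v \le -e^{-(K+1)t}\, u.
\end{equation*}
If $u \not\equiv 0$ on $M \times [0, t_*)$, then $\sup v > 0 = \sup v(\cdot, 0)$, and the sublinear growth condition in Theorem \ref{parabolic OY} holds trivially (with $\alpha = 0$) since $v$ is bounded. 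Theorem \ref{parabolic OY} then produces a sequence $(x_i, t_i)$ with $v(x_i, t_i) \to \sup v > 0$ and $\liminf_i (\partial_t - \Delta^{M_{t_i}}) v(x_i, t_i) \ge 0$, while at the same time the displayed inequality forces $(\partial_t - \Delta^{M_{t_i}}) v(x_i, t_i) \le -c < 0$ for all sufficiently large $i$, a contradiction. Hence $u \equiv 0$ on $[0, t_*)$, and the uniform control on $|\partial_t \gamma|$ then bootstraps to $t_* = T$.

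The principal obstacle I expect is verifying the pointwise inequality $(\partial_t - \Delta) u \le Ku$: this is essentially Wang's tensor computation combined with the totally geodesic hypothesis on $\Sigma$, but one must carefully check that the smooth extension $\rho$ introduces no harmful additional terms inside $U$, which is why the first-exit-time argument restricts attention to that region. Once this local inequality is secured, the noncompactness is absorbed cleanly by Theorem \ref{parabolic OY}, with the sublinear growth hypothesis coming for free because the Grassmannian is compact.
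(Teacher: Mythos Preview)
Your proposal is correct and follows essentially the same route as the paper: compose the Gauss map with the squared distance to $\Sigma$, invoke Wang's computation together with the bound on $|A|$ to get $(\partial_t - \Delta)u \le Ku$ inside a tube, multiply by $e^{-(K+1)t}$, and apply Theorem~\ref{parabolic OY} to derive a contradiction. The only cosmetic differences are that the paper works on a fixed time interval $[0,T']$ with $T'$ chosen so that $\gamma$ stays in the tube (then iterates finitely many times), whereas you phrase this as a first-exit-time argument, and the paper does not bother extending $d_\Sigma^2$ beyond the tube since it is never needed there.
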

As a corollary, we have the following:
\begin{cor} \label{preserve Lagrangian}
Let $F_0: M^n \to \mathbb R^{2n}$ be a proper Lagrangian immersion and let $F :M\times [0,T] \to \mathbb R^{2n}$ be a mean curvature flow with uniformly bounded second fundamental form. Then $F_t$ is Lagrangian for all $t\in [0,T]$.
\end{cor}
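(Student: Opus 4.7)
The corollary is an immediate application of Theorem \ref{main theorem} once we identify the Lagrangian Grassmannian as a totally geodesic submanifold. The plan is to let $\Sigma \subset \mathrm{Gr}(n, 2n)$ be the set of Lagrangian $n$-planes in $\mathbb{R}^{2n}$ (with respect to the standard symplectic form), and to check that $\Sigma$ satisfies the hypotheses on the target submanifold in Theorem \ref{main theorem}.

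First I would recall that, identifying $\mathbb{R}^{2n} \cong \mathbb{C}^n$, the Lagrangian Grassmannian $\Sigma = \Lambda(n)$ can be realized as the homogeneous space $U(n)/O(n)$. In particular $\Sigma$ is compact. Next I would note, citing the classical description of $\mathrm{Gr}(n, 2n)$ as a symmetric space $O(2n)/(O(n)\times O(n))$ with its standard metric, that $\Lambda(n)$ sits inside $\mathrm{Gr}(n, 2n)$ as a totally geodesic submanifold (this is the standard symmetric-pair embedding; see e.g.\ the discussion of Lagrangian Grassmannians in Helgason, or the direct verification that $\Lambda(n)$ is the fixed-point set of the isometric involution on $\mathrm{Gr}(n, 2n)$ that sends an $n$-plane $V$ to the orthogonal complement of $JV$, where $J$ is the almost complex structure on $\mathbb{R}^{2n}$; fixed-point sets of isometric involutions are totally geodesic).

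Now, by hypothesis $F_0$ is Lagrangian, which means exactly that for every $x \in M$ the tangent plane $dF_0(T_x M) \subset \mathbb{R}^{2n}$ lies in $\Sigma$. Equivalently $\gamma(\cdot, 0) \subset \Sigma$. Since $F$ is a proper mean curvature flow in $\mathbb{R}^{n+m} = \mathbb{R}^{2n}$ (with $m=n$) with uniformly bounded second fundamental form, and since $\Sigma$ is a compact totally geodesic submanifold of the Grassmannian, Theorem \ref{main theorem} applies directly and yields $\gamma(\cdot, t) \subset \Sigma$ for all $t \in [0,T]$. Translating back, this says $dF_t(T_xM)$ is Lagrangian for every $x \in M$ and every $t \in [0,T]$, i.e.\ $F_t$ is Lagrangian.

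The only real content beyond invoking Theorem \ref{main theorem} is the geometric fact that $\Lambda(n)$ is compact and totally geodesic in $\mathrm{Gr}(n, 2n)$; the former is elementary from the $U(n)/O(n)$ description, and the latter is standard symmetric space theory. Once these are in place the corollary follows with no additional analytic work, so I would keep the writeup short and simply reference the relevant facts.
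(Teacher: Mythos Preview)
Your proposal is correct and follows exactly the same approach as the paper: identify the Lagrangian Grassmannian as a compact totally geodesic submanifold of $G(n,n)$ and apply Theorem~\ref{main theorem}. Your additional justification (the $U(n)/O(n)$ description and the fixed-point-of-isometric-involution argument for total geodesy) is a nice elaboration of what the paper simply asserts.
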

The above result is well-known when $M$ is compact \cite{Sm}, \cite{Wang}. Various forms of Corollary \ref{preserve Lagrangian} are known to the experts (see remark \ref{Neves, Sm} below).

The second application is to derive a Omori-Yau maximum principle for the $\mathcal L$-operator of a proper self-shrinker. The $\mathcal L$ operator is introduced in \cite{CM} when the authors study the entropy stability of a self-shrinker. Since then it proves to be an important operator in mean curvature flow. Using Theorem \ref{parabolic OY}, we prove

\begin{thm} \label{OY for self shrinker}
Let $\widetilde F : M^n\to \mathbb R^{n+m}$ be a properly immersed self-shrinker and let $f: M^n\to \mathbb R$ be a twice differentiable function so that
\begin{equation}
f(x) \le C(1+ |\widetilde F(x) |^\alpha)
\end{equation}
for some $C>0$ and $\alpha \in [0,1)$. Then there exists a sequence $\{x_i\}$ in $M$ so that
\begin{equation}
f(x_i) \to \sup_M f,\ \ \ |\nabla f|(x_i)\to 0,\ \ \ \limsup_{i\to \infty} \mathcal L f(x_i) \le 0.
\end{equation}
\end{thm}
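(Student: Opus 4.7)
The plan is to derive Theorem \ref{OY for self shrinker} by applying the parabolic Omori-Yau principle (Theorem \ref{parabolic OY}) to the mean curvature flow associated with the shrinker $\widetilde F$, on a carefully chosen test function. I will present the argument in the case $\sup f < \infty$; the case $\sup f = \infty$ can be treated by a similar modification of the test function.

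First, I would produce the MCF starting at $\widetilde F$. Since the shrinker equation $\widetilde H = -\tfrac{1}{2}\widetilde F^\perp$ holds, the homothetic family with scale $\sigma(t) = \sqrt{1-t}$ is MCF after a tangential reparametrization. Concretely, let $\phi_t: M \to M$ be the flow of the time-dependent vector field $V_t$ characterized by $d\widetilde F(V_t) = \tfrac{1}{2(1-t)}\widetilde F^T$, with $\phi_0 = \mathrm{id}$; properness of $\widetilde F$ ensures global existence of $\phi_t$ on $[0,T]$ for any $T < 1$. Then $F(x,t) = \sigma(t)\,\widetilde F(\phi_t(x))$ is a proper MCF whose induced metric is $g_t = \sigma(t)^2\,\phi_t^*\widetilde g$. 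Since $\mathbb R^{n+m}$ is flat, the curvature hypothesis of Theorem \ref{parabolic OY} is trivially satisfied.

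Next, for each $\epsilon > 0$ I would introduce the auxiliary function $u_\epsilon(x,t) = f(\phi_t(x)) + \epsilon t$ and establish the key identity
\begin{equation*}
\left(\frac{\partial}{\partial t} - \Delta^{M_t}\right) u_\epsilon(x,t) \;=\; -\,\frac{\mathcal L f(\phi_t(x))}{1-t} \;+\; \epsilon.
\end{equation*}
This combines two observations. First, $\phi_t$ is an isometry from $(M, \phi_t^*\widetilde g)$ onto $(M, \widetilde g)$, and conformal rescaling by $\sigma^2$ gives $\Delta^{M_t}(f\circ\phi_t) = \sigma^{-2}(\Delta f)\circ\phi_t$. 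Second, $\partial_t(f\circ\phi_t)(x) = df_{\phi_t(x)}(V_t) = \tfrac{1}{2(1-t)}\langle \widetilde F, \nabla f\rangle|_{\phi_t(x)}$, since $d\widetilde F(V_t)$ is a multiple of $\widetilde F^T$; this is precisely the drift term of $\mathcal L$. The three hypotheses of Theorem \ref{parabolic OY} are then easy to verify: sublinear growth of $u_\epsilon$ in $|F(x,t)|$ follows from $|\widetilde F(\phi_t(x))| \le (1-T)^{-1/2}|F(x,t)|$, and the strict supremum condition holds because $\phi_T$ is a diffeomorphism, so $\sup u_\epsilon = \sup f + \epsilon T > \sup f = \sup u_\epsilon(\cdot,0)$.

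Finally, Theorem \ref{parabolic OY} supplies a sequence $(x_i, t_i)$ with the usual OY conclusions. Setting $y_i = \phi_{t_i}(x_i)$, the gradient-norm identity $|\nabla^{M_{t_i}} u_\epsilon|^2 = (1-t_i)^{-1}|\nabla f|^2(y_i)$ combined with the key identity above translates these conclusions into $f(y_i) \to \sup f$ (forcing $t_i \to T$), $|\nabla f|(y_i) \to 0$, and $\limsup_i \mathcal L f(y_i) \le \epsilon$. A diagonal argument in $\epsilon = 1/k$ then extracts the sequence required by the theorem. I expect the main technical point to be the key identity itself: the drift $-\tfrac{1}{2}\langle \widetilde F, \nabla f\rangle$ of $\mathcal L$ appears exactly because the tangential reparametrization $V_t$ needed to convert the homothetic shrinker flow into an honest MCF parametrization is a multiple of $\widetilde F^T$, so that differentiating $f\circ\phi_t$ in $t$ produces precisely this drift term.
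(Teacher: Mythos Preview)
Your approach is correct and is essentially the paper's: build the self-similar MCF $F(x,t)=\sigma(t)\,\widetilde F(\phi_t(x))$ and apply Theorem \ref{parabolic OY} to $u=f\circ\phi_t$, so that the reparametrization $\partial_t\phi_t \propto \widetilde F^\top$ produces exactly the drift term of $\mathcal L$. The only difference is that the paper avoids your $+\epsilon t$ perturbation and the ensuing diagonal argument by noting that the penalty $\epsilon_i|F(x,t)|^2 = \epsilon_i\,\sigma(t)^2|\widetilde F(\phi_t(x))|^2$ appearing inside the proof of Theorem \ref{parabolic OY} is monotone in $t$, so the maximizer of $u_i$ is automatically at the final time; this bypasses condition (1) altogether and handles the case $\sup f=\infty$ without the separate treatment you deferred.
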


The above theorem is a generalization of Theorem 5 in \cite{CJQ} since we assume weaker conditions on $f$.

In section 2, we prove the parabolic Omori-Yau maximum principle. In section 3 we prove Theorem \ref{main theorem} and in section 4 we prove Theorem \ref{OY for self shrinker}. The author would like to thank Jingyi Chen for the discussion on Omori-Yau maximum principle and Kwok Kun Kwong for suggesting the work of Li and Wang \cite{LW}. 

%%%%%%%%%%%%%%%%%%%%%%%%%%%%%%%%%%%%%%%%

\section{Proof of the parabolic Omori-Yau maximum principle}
Let $(\overline M^{n+m}, \overline g)$ be an $n+m$ dimensional complete noncompact Riemannian manifold. Let $F : M \times [0,T]\to \overline M$, where $M$ is an $n$-dimensional noncompact manifold, be a family of immersions $\{ F(\cdot, t): M \to \overline M\}$ which satisfies the mean curvature flow equation
\begin{equation} \label{MCF eqn}
\frac{\partial F}{\partial t } (x, t)= \vec H (x, t).
\end{equation}
Here $\vec H (x, t)$ is the mean curvature vector given by
\begin{equation}
\vec H = \text{tr} A
\end{equation}
and $A(X, Y) = (\overline \nabla_X Y)^\perp$ is the second fundamental form of the immersion $F(\cdot, t)$.

Next we recall the definition of $\ell$-sectional curvature in \cite{LW}. Let $\overline M^N$ be an $N$-dimensional Riemannian manifold. Let $p \in \overline M$, $1\le \ell \le N-1$. Consider a pair $\{ w, V\}$, where $w\in T_p\overline M$ and $V\subset T_p\overline M$ is a $\ell$-dimensional subspace so that $w$ is perpendicular to $V$. 

\begin{dfn} 
The $\ell$-sectional curvature of $\{w, V\}$ is given by
\begin{equation} \label{dfn of l section cur}
K^\ell_{\overline M}(w, V) = \sum_{i=1}^\ell \langle R(w, e_i)w, e_i\rangle,
\end{equation}
where $R$ is the Riemann Curvature tensor on $\overline M$ and $\{e_1, \cdots, e_\ell\}$ is any orthonormal basis of $V$. 
\end{dfn}

We say that $\overline M$ has $\ell$-sectional curvature bounded from below by a constant $C$ if 
\begin{equation*}
K^\ell_{\overline M} (w, V) \ge \ell C
\end{equation*}
for all pairs $\{w, V\}$ at any point $p\in M$. In \cite{LW}, the authors prove the following comparison theorem for the distance function $r$ on manifolds with lower bound on $\ell$-sectional curvatures. 

\begin{thm} \label{comp thm of LW} [Theorem 1.2 in \cite{LW}]
Assume that $\overline M$ has $\ell$-sectional curvature bounded from below by $-C$ for some $C>0$. Let $p \in M$ and $r(x) = d_{\overline g} (x, p)$. If $x$ is not in the cut locus of $p$ and $V\subset T_x \overline M$ is perpendicular to $\nabla r(x)$, then
\begin{equation}
\sum_{i=1}^\ell \nabla^2 r(e_i, e_i)\le \ell \sqrt C \coth (\sqrt C r),
\end{equation}
where $\{e_1,\cdots , e_{\ell}\}$ is any orthonormal basis of $V$.
\end{thm}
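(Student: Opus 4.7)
The plan is to prove this Hessian comparison by the classical index form / Jacobi field technique, adapted so that the $\ell$-sectional curvature hypothesis enters through a single summed Riemann curvature expression rather than through $\ell$ individual sectional curvatures. Since $x$ is not in the cut locus of $p$, there is a unique unit-speed minimizing geodesic $\gamma:[0,r]\to\overline M$ with $\gamma(0)=p$ and $\gamma(r)=x$, and $r$ is smooth in a neighborhood of $x$ with $\nabla r(x)=\gamma'(r)$. For each $e_i \in V$ let $J_i$ be the unique Jacobi field along $\gamma$ with $J_i(0)=0$ and $J_i(r)=e_i$; because $e_i\perp\gamma'(r)$, the field $J_i$ is perpendicular to $\gamma'$ along $\gamma$. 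The second variation formula identifies the Hessian of the distance function as the index form:
\begin{equation*}
\nabla^2 r(e_i,e_i)\;=\;I(J_i,J_i)\;=\;\int_0^r\bigl(|J_i'|^2-\langle R(J_i,\gamma')\gamma',J_i\rangle\bigr)\,ds.
\end{equation*}

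Next I would set up the comparison via explicit test vector fields. Let $E_i(s)$ denote the parallel transport of $e_i$ along $\gamma$, so that $(E_1(s),\dots,E_\ell(s))$ is an orthonormal frame perpendicular to $\gamma'(s)$ for every $s$, and set $X_i(s)=\phi(s)E_i(s)$ for a scalar function $\phi:[0,r]\to\mathbb R$ with $\phi(0)=0$ and $\phi(r)=1$. Because $J_i$ minimizes the index form among piecewise smooth vector fields with the same boundary data (the basic Jacobi-field comparison applies since $x$ is not in the cut locus), one has $I(J_i,J_i)\le I(X_i,X_i)$. Using that $E_i$ is parallel so $X_i'=\phi'E_i$ and $|X_i'|^2=(\phi')^2$, and summing over $i$,
\begin{equation*}
\sum_{i=1}^\ell\nabla^2r(e_i,e_i)\;\le\;\int_0^r\Bigl(\ell\,\phi'(s)^2-\phi(s)^2\sum_{i=1}^\ell\langle R(E_i,\gamma')\gamma',E_i\rangle\Bigr)\,ds.
\end{equation*}
The key point now is that, at each $s$, the pair $\{\gamma'(s),\mathrm{span}(E_1(s),\dots,E_\ell(s))\}$ is exactly a configuration to which the hypothesis on $K^\ell_{\overline M}$ applies, so by definition \eqref{dfn of l section cur}, $\sum_i\langle R(\gamma',E_i)\gamma',E_i\rangle\ge -\ell C$. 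Substituting gives
\begin{equation*}
\sum_{i=1}^\ell\nabla^2r(e_i,e_i)\;\le\;\ell\int_0^r\bigl(\phi'(s)^2+C\phi(s)^2\bigr)\,ds.
\end{equation*}

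Finally, I would optimize the right-hand side over $\phi$. The Euler–Lagrange equation $\phi''=C\phi$ with the boundary values $\phi(0)=0$, $\phi(r)=1$ has the unique solution $\phi(s)=\sinh(\sqrt C\,s)/\sinh(\sqrt C\,r)$. For this $\phi$, integration by parts gives
\begin{equation*}
\int_0^r\bigl(\phi'(s)^2+C\phi(s)^2\bigr)\,ds\;=\;\phi(r)\phi'(r)-\int_0^r\phi(\phi''-C\phi)\,ds\;=\;\sqrt C\,\coth(\sqrt C\,r),
\end{equation*}
which yields the claimed bound.

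I expect the main obstacle to be a conceptual rather than computational one: it lies in recognizing that a single scalar factor $\phi$ may be used simultaneously for all $\ell$ test fields, so that after summation the curvature contribution collapses into exactly the quantity controlled by the $\ell$-sectional curvature hypothesis. This requires the observation that parallel transport preserves orthonormality, so $(E_1(s),\dots,E_\ell(s))$ spans a genuine $\ell$-plane orthogonal to $\gamma'(s)$ along the whole geodesic and the hypothesis is applicable at every $s\in[0,r]$; after that step, the variational problem for $\phi$ is the same one that underlies the classical Hessian comparison theorem and presents no further difficulty.
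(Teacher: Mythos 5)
The paper does not prove this statement---it is quoted verbatim as Theorem~1.2 of \cite{LW}---so there is no internal proof to compare against; your index-form argument, using the Jacobi-field identity $\nabla^2 r(e_i,e_i)=I(J_i,J_i)$, the index lemma, and the common test fields $\phi E_i$ with parallel $E_i$ so that the curvature terms sum to exactly the quantity controlled by the $\ell$-sectional curvature hypothesis, is correct and is essentially the proof given by Li and Wang. The only point to watch is the sign convention in $\langle R(w,e_i)w,e_i\rangle$, which must be read (as the paper's Definition clearly intends) so that this is the sectional-curvature numerator entering the index form with a minus sign.
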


Now we prove Theorem \ref{parabolic OY}. We recall that $F$ is assumed to be proper, and $u$ satisfies condition (1)-(3) in the statement of Theorem \ref{parabolic OY}.

\begin{proof} [Proof of Theorem \ref{parabolic OY}]
Adding a constant to $u$ if necessary, we assume
$$\sup_{x\in M} u(x, 0) = 0.$$
By condition (1) in Theorem \ref{parabolic OY}, we have $u(y, s) >0$ for some $(y, s)$. Note that $s>0$. Let $y_0 \in \overline M$ and $r (y) = d_{\bar g} (y, y_0)$ be the distance to $y_0$ in $\overline M$. Let $\rho(x, t) = r(F(x, t))$. Note that $u(y,s) - \epsilon \rho(y, s)^2 >0$ whenever $\epsilon$ is small. Let $(\bar x_i, s_i)$ be a sequence so that $ u(\bar x_i, s_i) \to \sup u \in (0, \infty]$. Let $\{\epsilon_i\}$ be a sequence in $(0,\epsilon)$ converging to $0$ which satisfies
\begin{equation} \label{choice of epsilon i}
 \epsilon_i \rho(\bar x_i, s_i)^2 \le \frac {1}{i}, \ \ \ i=1, 2, \cdots.
\end{equation}
Define
\[ u_i (x, t) = u(x, t) - \epsilon_i \rho(x,t)^2. \]
Note that $u_i (y, s) >0$ and $u_i (\cdot, 0) \le 0$. Using condition (3) in Theorem \ref{parabolic OY}, there is $R>0$ so that $u_i (x, t)\le 0$ when $F(x, t) \notin B_R(y_0)$, the closed ball in $\overline M$ centered at $y_0$ with radius $R$. Since $\overline M$ is complete, $B_R(y_0)$ is a compact subset. Furthermore, $F$ is proper and thus $u_i$ attains a maximum at some $(x_i, t_i) \in M\times (0,T]$. From the choice of $(\bar x_i, s_i)$ and $\epsilon_i$ in (\ref{choice of epsilon i}),
$$u(x_i, t_i) \ge u_i(x_i, t_i) \ge u_i(\bar x_i, s_i) \ge u(\bar x_i, s_i) -\frac 1i.$$
Thus we have
$$u(x_i, t_i) \to \sup u.$$
Now we consider the derivatives of $u$ at $(x_i, t_i)$. If $F(x_i, t_i)$ is not in the cut locus of $y_0$, then $\rho$ is differentiable at $(x_i, t_i)$. Then so is $u_i$ and we have
\begin{equation} \label{eqn 1}
\nabla^{M_{t_i}} u_i = 0\ \ \text{ and }\ \left(\ppt{} - \Delta^{M_{t_i}} \right) u_i \ge 0 \ \ \text{ at } (x_i, t_i).
\end{equation}
(The inequality holds since $t_i >0$). The first equality implies
\begin{equation} \label{gradient of rho^2}
\nabla^{M_{t_i}} u = \epsilon_i \nabla^{M_{t_i}} \rho^2 = 2\epsilon_i \rho (\nabla r)^\top 
\end{equation}
at $(x_i, t_i)$, where $(\cdot)^\top$ denotes the projection onto $T_{x_i} M_{t_i}$. Let $\{e_1, \cdots, e_n\}$ be any orthonormal basis at $T_{x_i}M_{t_i}$ with respect to $g_{t_i}$. Then
\begin{equation} \label{Lap of rho ^2}
 \Delta^{M_{t_i}} \rho^2 = 2 \sum_{i=1}^n |\nabla_{e_i} ^{M_{t_i}} r |^2 + 2\rho \sum_{i=1}^n \nabla ^2 r(e_i, e_i) + 2\rho \bar g(\nabla r , \vec H).
\end{equation}
Next we use the lower bound on $(n-1)$-sectional curvature of $\overline M$ to obtain the following lemma.
\begin{lem} \label{lemma in main thm}
There is $C_1 = C_1(n, C)>0$ so that
\begin{equation} \label{consequence of Li wang comparison}
\sum_{i=1}^n  \nabla ^2 r(e_i, e_i) \le C_1\rho.
\end{equation}
\end{lem}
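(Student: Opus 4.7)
The key observation is that $\sum_{i=1}^n \nabla^2 r(e_i,e_i)$ is the trace of $\nabla^2 r$ restricted to $T_{x_i} M_{t_i}$ and is therefore independent of the choice of orthonormal basis. This frees me to adapt the frame to the decomposition of $\nabla r$. My plan is to pick $n-1$ of the basis vectors in $(\nabla r)^\perp$ so that Theorem \ref{comp thm of LW} applies directly with $\ell = n-1$, and then handle the remaining direction by exploiting the fact that $\nabla r$ lies in the kernel of $\nabla^2 r$.

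Decompose $\nabla r = (\nabla r)^\top + (\nabla r)^\perp$ along $T_{x_i} M_{t_i}$ and its normal bundle. Assuming $(\nabla r)^\top \ne 0$ (the opposite case is easier since $T_{x_i} M_{t_i}$ already lies in $(\nabla r)^\perp$), I set $e_n = (\nabla r)^\top / |(\nabla r)^\top|$ and complete to an orthonormal basis $\{e_1,\dots,e_n\}$ of $T_{x_i} M_{t_i}$. Each $e_j$ with $j \le n-1$ is orthogonal to $(\nabla r)^\top$ by construction and to $(\nabla r)^\perp$ because $e_j \in T_{x_i} M_{t_i}$, so $e_j \perp \nabla r$ in $T_{F(x_i,t_i)} \overline M$. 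Applying Theorem \ref{comp thm of LW} with $V = \mathrm{span}(e_1,\dots,e_{n-1})$ yields
\[ \sum_{j=1}^{n-1} \nabla^2 r(e_j,e_j) \le (n-1)\sqrt{C}\coth(\sqrt{C}\rho). \]

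For the remaining term I write $e_n = \cos\theta\,\nabla r + \sin\theta\,W$ with $W \in (\nabla r)^\perp$ a unit vector; the identity $\nabla^2 r(\nabla r,\cdot) \equiv 0$, valid off the cut locus since $|\nabla r|\equiv 1$, gives $\nabla^2 r(e_n,e_n) = \sin^2\theta\,\nabla^2 r(W,W)$. A direct check using $\langle e_n,e_j\rangle = 0$ and $\langle \nabla r,e_j\rangle = 0$ shows $W \perp e_j$ for $j \le n-1$, so $\{W,e_1,\dots,e_{n-1}\}$ is an orthonormal $n$-frame in $(\nabla r)^\perp$. Summing Theorem \ref{comp thm of LW} over the $n$ distinct $(n-1)$-dimensional subspaces obtained by deleting one element from this frame upgrades the trace bound to
\[ \sum_{j=1}^{n-1} \nabla^2 r(e_j,e_j) + \nabla^2 r(W,W) \le n\sqrt{C}\coth(\sqrt{C}\rho). \]
A short case split on the sign of $\nabla^2 r(W,W)$ (drop the term when non-positive, use $\sin^2\theta \le 1$ otherwise) then gives $\sum_{i=1}^n \nabla^2 r(e_i,e_i) \le n\sqrt{C}\coth(\sqrt{C}\rho)$, from which the bound claimed in the lemma follows via the elementary inequality $\sqrt{C}\rho\coth(\sqrt{C}\rho) \le 1 + \sqrt{C}\rho$ and by absorbing constants into $C_1 = C_1(n,C)$.

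The main obstacle is that Theorem \ref{comp thm of LW} only controls traces of $\nabla^2 r$ over $(n-1)$-dimensional subspaces of $(\nabla r)^\perp$, not individual Hessian values, whereas $T_{x_i}M_{t_i}$ is $n$-dimensional and generically not contained in $(\nabla r)^\perp$. The resolution is the averaging argument above, which upgrades the $(n-1)$-dimensional trace bound to an $n$-dimensional one at the cost of a factor of $n/(n-1)$, combined with the sign case analysis that handles the lone direction $e_n \notin (\nabla r)^\perp$.
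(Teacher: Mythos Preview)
Your proposal is correct and essentially identical to the paper's proof: both choose the distinguished tangent direction parallel to $(\nabla r)^\top$, use $\nabla^2 r(\nabla r,\cdot)=0$ to reduce to an orthonormal $n$-frame $\{W,e_1,\dots,e_{n-1}\}$ (the paper's $\{f_1,e_2,\dots,e_n\}$) lying in $(\nabla r)^\perp$, and then average Theorem~\ref{comp thm of LW} over the $n$ coordinate $(n-1)$-planes to upgrade the trace bound. The only cosmetic difference is in absorbing the factor $\sin^2\theta$ (the paper's $b^2$): the paper writes the sum as a convex combination $b^2(\nabla^2 r(f_1,f_1)+\sum_{i\ge 2})+(1-b^2)\sum_{i\ge 2}$ and bounds each piece, whereas you do a sign case split on $\nabla^2 r(W,W)$; both reach the same bound $n\sqrt{C}\coth(\sqrt{C}\rho)$.
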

\begin{proof}[Proof of lemma]: We consider two cases. First, if $\gamma'$ is perpendicular to $T_{x_i}M_{t_i}$, write
\begin{equation*}
\sum_{i=1}^n \nabla ^2 r(e_i, e_i) = \frac{1}{n-1} \sum_{j=1}^n \sum_{i\neq j} \nabla ^2 r(e_i, e_i).
\end{equation*}
Since $\overline M$ has $(n-1)$-sectional curvature bounded from below by $-C$, we apply Theorem \ref{comp thm of LW} to the plane $V$ spanned by $\{e_1, \cdots , e_n\} \setminus \{e_i\}$ for each $i$. Thus
\begin{equation} \label{lemma in main thm first case}
\begin{split}
\sum_{i=1}^n \nabla ^2 r(e_i, e_i) &\le \frac{n}{n-1} \sum_{j=1}^{n-1}  \sqrt C \rho \coth (\sqrt C \rho) \\
&= n  \sqrt C \rho \coth (\sqrt C \rho) .\\
\end{split}
\end{equation}

Second, if $\gamma'$ is not perpendicular to $T_{x_i} M_{t_i}$, since the right hand side of (\ref{consequence of Li wang comparison}) is independent of the orthonormal basis chosen, we can assume that $e_1$ is parallel to the projection of $\gamma'$ onto $T_{x_i}M_{t_i}$. Write
\begin{equation*}
e_1 = e_1^\perp + a\gamma',
\end{equation*}
where $e_1^\perp$ lies in the orthogonal complement of $\gamma'$ and $a= \langle e_1, \gamma'\rangle$. By a direct calculation,
\begin{equation} \label{Hr = Hr perp}
\begin{split}
\nabla^2 r(e_1, e_1) &=(\nabla_{e_1} \nabla r)(e_1) \\
&=e_1 \langle \gamma' , e_1\rangle - \langle \gamma' , \nabla_{e_1}e_1\rangle \\
&= \langle \nabla_{e_1} \gamma', e_1\rangle \\
&= \langle \nabla_{e_1^\perp + a\gamma' } \gamma' , e_1^\perp + a\gamma'\rangle \\
&= \langle \nabla_{e_1^\perp} \gamma' , e_1^\perp \rangle + a \langle \nabla_{e_1^\perp} \gamma', \gamma' \rangle \\
&= \nabla^2 r (e_1^\perp, e_1^\perp).
\end{split}
\end{equation}
We further split into two situations. If $e_1^\perp = 0$, then the above shows $\nabla^2 r(e_1, e_1) = 0$. Using Theorem \ref{comp thm of LW} we conclude
\begin{equation} \label{lemma in main thm second case}
\begin{split}
\sum_{i=1}^n \nabla ^2 r(e_i, e_i) & =\sum_{i=2}^n \nabla ^2 r(e_i, e_i) \\
&\le (n-1)   \sqrt C \rho \coth (\sqrt C \rho) \\
\end{split}
\end{equation}

If $e_1^\perp \neq 0$, write $b = \| e_1^\perp \|$ and $f_1 = b^{-1} e_1^\perp$. Then $\{f_1, e_2, \cdots, e_n\}$ is an orthonormal basis of a $n$-dimensional plane in $T_{F(x_i,t_i)}\overline M$ orthogonal to $\gamma'$. Using (\ref{Hr = Hr perp}),
\begin{equation*}
\begin{split}
\sum_{i=1}^n \nabla ^2 r(e_i, e_i) &=  \nabla ^2 r(e_1^\perp, e_1^\perp) +\sum_{i=2}^n \nabla ^2 r(e_i, e_i) \\
&= b^2 \nabla^2 r(f_1, f_1) + \sum_{i=2}^n \nabla ^2 r(e_i, e_i) \\
&= b^2\left( \nabla^2 r(f_1, f_1) + \sum_{i=2}^n \nabla ^2 r(e_i, e_i)\right) + (1-b^2)  \sum_{i=2}^n \nabla ^2 r(e_i, e_i).
\end{split}
\end{equation*}
Now we apply Theorem \ref{comp thm of LW} again (note that the first term can be dealt with as in (\ref{lemma in main thm first case}))
\begin{equation} \label{lemma in main thm third case}
\begin{split}
\sum_{i=1}^n \nabla ^2 r(e_i, e_i) &\le b^2 n\sqrt C \rho \coth (\sqrt C \rho) + (1-b^2) (n-1)   \sqrt C \rho \coth (\sqrt C \rho) \\
&\le n\sqrt C \rho \coth (\sqrt C \rho). 
\end{split}
\end{equation}
Summarizing (\ref{lemma in main thm first case}), (\ref{lemma in main thm second case}) and (\ref{lemma in main thm third case}), we have
\begin{equation*}
\sum_{i=1}^n \nabla^2 r(e_i, e_i) \le n\sqrt C \rho \coth (\sqrt C \rho) \le C_1 \rho
\end{equation*}
for some $C_1 = C_1(n, C)>0$. Thus the lemma is proved.
\end{proof}

Using Lemma \ref{lemma in main thm}, (\ref{Lap of rho ^2}) and $\ppt \rho^2 = 2\rho \bar g(\nabla r , \vec H)$,
\begin{equation} \label{box of rho ^2}
\begin{split}
\left(\ppt{} - \Delta^{M_{t_i}}\right) \rho^2 &= -2 \sum_{i=1}^n |\nabla_{e_i} ^{M_{t_i}} r |^2 - 2\rho \sum_{i=1}^n \nabla ^2 r(e_i, e_i) \\
&\ge - 2n - 2C_1 \rho\\
\end{split}
\end{equation}
(\ref{gradient of rho^2}) and (\ref{box of rho ^2}) imply that at $(x_i, t_i)$ we have respectively
\begin{equation} \label{inequality on grad u}
|\nabla u|\le 2\epsilon_i \rho
\end{equation}
and
\begin{equation} \label{inequality on box u}
\left( \ppt{} - \Delta^{M_{t_i}}\right) u \ge -2\epsilon_i (n+C_1 \rho).
\end{equation}
Note
$$ u(x_i, t_i) -\epsilon_i \rho(x_i, t_i)^2 = u_i (x_i, t_i)   \ge u_i(y,s)>0.$$
This implies
$$ \rho(x_i,t_i)^2 \le u(x_i,t_i)\epsilon_i^{-1}.$$
Using the sub-linear growth condition (3) of $u$ and Young's inequality, we have 
\begin{equation*}
\begin{split}
 \rho(x_i, t_i)^2 &\le B\epsilon_i^{-1} + B \epsilon_i^{-1} \rho (x_i, t_i)^\alpha\\
&\le B \epsilon_i^{-1} + \frac 12 \rho(x_i, t_i)^2 + \frac 12 (B\epsilon_i^{-1})^{\frac{2}{2-\alpha}}.
\end{split}
\end{equation*}
Thus we get
$$ \rho (x_i, t_i)\epsilon_i \le \sqrt{2B}\sqrt{\epsilon_i}+B^{\frac{1}{2-\alpha}}\epsilon_i^{\frac{1-\alpha}{2-\alpha}} . $$
Together with (\ref{inequality on grad u}), (\ref{inequality on box u}) and that $\epsilon_i \to 0$,
\begin{equation*}
|\nabla u|(x_i, t_i) \to 0, \ \ \ \liminf_{i\to \infty} \left(\ppt{} -\Delta^{M_{t_i}}\right) u(x_i, t_i) \ge 0.
\end{equation*}
This proves the theorem if $\rho$ is smooth at $(x_i, t_i)$ for all $i$. When $\rho$ is not differentiable at some $(x_i, t_i)$, one applies the Calabi's trick by considering $r_\epsilon  (y) = d_{\bar g} (y, y_\epsilon)$ instead of $r$, where $y_\epsilon$ is a point closed to $y_0$. The method is standard and thus is skipped.
\end{proof}

\begin{rem}
Condition (1) in the above theorem is used solely to exclude the case that $u_i$ is maximized at $(x_i, 0)$ for some $x_i \in M$. The condition can be dropped if that does not happen (see the proof of Theorem \ref{OY for self shrinker}).
\end{rem}

%%%%%%%%%%%%%%%%%%%%%%%%%%%%%%%%%%%%%%%%

\section{Preservation of Gauss image}
In this section we assume that $F_0 : M^n\to \mathbb R^{n+m}$ is a proper immersion. Let $F : M\times [0,T] \to \mathbb R^{n+m}$ be a mean curvature flow starting at $F_0$. We further assume that the second fundamental form are uniformly bounded: there is $C_0>0$ so that
\begin{equation} \label{A bounded}
\| A(x,t)\| \le C_0,\ \ \ \text{for all } (x, t) \in M\times [0,T].
\end{equation}

\begin{lem} \label{F is proper}
The mapping $F$ is proper.
\end{lem}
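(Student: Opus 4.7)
The plan is to exploit the fact that a uniform bound on the second fundamental form forces a uniform bound on the mean curvature vector, which in turn forces each trajectory $t\mapsto F(x,t)$ to stay within a bounded tube around its initial point. Properness of $F_0$ then upgrades this to properness of $F$ on $M\times [0,T]$.

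First I would record that since $\vec H = \mathrm{tr}_g A$, the hypothesis (\ref{A bounded}) gives a uniform bound $|\vec H(x,t)| \le C_1$ on $M\times [0,T]$ for some constant $C_1 = C_1(n, C_0)$. Integrating the mean curvature flow equation $\partial_t F = \vec H$ in $t$ and using the triangle inequality in $\mathbb R^{n+m}$ then yields
\begin{equation*}
|F(x,t) - F(x,0)| \;\le\; \int_0^t |\vec H(x,s)|\, ds \;\le\; C_1 T
\end{equation*}
for every $(x,t)\in M\times [0,T]$.

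Next, for any compact set $K\subset \mathbb R^{n+m}$, I would introduce the closed $C_1T$-neighborhood $\widetilde K = \{y\in \mathbb R^{n+m} : d(y,K)\le C_1T\}$, which is closed and bounded and therefore compact. The displayed estimate shows that if $F(x,t)\in K$ then $F_0(x) = F(x,0)\in \widetilde K$, whence
\begin{equation*}
F^{-1}(K) \;\subset\; F_0^{-1}(\widetilde K) \times [0,T].
\end{equation*}
Properness of $F_0$ makes $F_0^{-1}(\widetilde K)$ compact in $M$, and since $F$ is continuous the set $F^{-1}(K)$ is closed inside this compact product, hence compact.

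I do not expect any real obstacle here: the argument is essentially integration of a bounded velocity followed by pullback under a proper map. The only mild point of care is the passage from $\|A\|\le C_0$ to $|\vec H|\le C_1$, but with either the operator or Hilbert--Schmidt convention for $\|A\|$ one obtains a bound $C_1$ depending only on $n$ and $C_0$, which is all that is needed.
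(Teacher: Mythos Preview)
Your argument is correct and is essentially the same as the paper's: integrate the bounded mean curvature to control $|F(x,t)-F_0(x)|$, then use properness of $F_0$ together with compactness of $[0,T]$ to conclude. The only cosmetic differences are that the paper works with closed balls $B_0(r)$ and a sequential compactness argument, while you phrase it for a general compact $K$ and use that a closed subset of a compact set is compact.
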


\begin{proof}
Let $B_0(r)$ be the closed ball in $\mathbb R^{n+m}$ centered at the origin with radius $r$. Then by (\ref{MCF eqn}) and (\ref{A bounded}) we have
\begin{equation*}
\begin{split}
|F(x, t) - F(x,0)| &= \left| \int_0^t\frac{\partial F}{\partial s} (x, s)ds\right| \\
&= \left| \int_0^t \vec H(x,s) ds \right| \\
&\le \sqrt n\int_0^t\| A(x, s)\| ds\\
&\le  C_0 \sqrt nT.
\end{split}
\end{equation*}
Thus if $(x, t) \in F^{-1}(B_0(r))$, then $x$ is in $F_0^{-1}(B_0(r + C_0\sqrt nT))$. Let $(x_n, t_n) \in F^{-1}(B_0(r))$. Since $F_0$ is proper, a subsequence of $\{x_n\}$ converges to $x\in M$. Since $[0,T]$ is compact, a subsequence of $(x_n, t_n)$ converges to $(x, t)$, which must be in $F^{-1}(B_0(r))$ since $F$ is continuous. As $r>0$ is arbitrary, $F$ is proper.
\end{proof}

In particular, the parabolic Omori-Yau maximum principle (Theorem \ref{parabolic OY}) can be applied in this case.

Let $G(n,m)$ be the real Grassmanians of $n$-planes in $\mathbb R^{n+m}$ and let
\begin{equation}\label{dfn of gauss map}
\gamma : M \times [0,T]\to G(n,m), \ \ \ x\mapsto F_*T_xM
\end{equation}
be the Gauss map of $ F$.

Now we prove Theorem \ref{main theorem}, which is a generalization of a Theorem of Wang \cite{Wang} to the noncompact situation with bounded second fundamental form.

\begin{proof} [Proof of Theorem \ref{main theorem}]
Let $d : G(n,m) \to \mathbb R$ be the distance to $\Sigma$. That is $d(\ell ) =  \inf_{L \in \Sigma} d(L, \ell)$. Since $\gamma (\cdot,0) \subset \Sigma$, we have $d\circ \gamma = 0$ when $t=0$. Using chain rule and (\ref{A bounded}), as $d\gamma = A$,
$$ d(\gamma(x,t)) = d(\gamma(x,t)) - d(\gamma(x,0)) = \int_0^t \nabla d \circ d\gamma (x, s) ds \le tC_0 .$$
Since $\Sigma \subset G(n,m)$ is compact, there is $\epsilon_0 >0$ so that the open set
$$ V  = \{ \ell \in G(n,m): d(\ell, \Sigma) <\sqrt\epsilon_0\}$$
lies in a small tubular neighborhood of $\Sigma$ and the function $d^2$ is smooth on this neighborhood.  Let $T' = \epsilon_0/2C_0$. Then the image of $f := d^2 \circ \gamma$ lies in this tubular neighborhood if $t\in [0,T']$ and $f$ is a smooth function on $M \times [0,T']$.

The calculation in Wang \cite{Wang} shows that
\begin{equation}
\left(\ppt {} -\Delta\right) f \le C |A_t|^2 f,
\end{equation}
where $C>0$ depends on $\epsilon_0$ and $\Sigma$. Together with (\ref{A bounded}) this shows that
$$\left(\ppt {} -\Delta\right) f \le C_1 f$$
for some positive constant $C_1$.

Let $g = e^{-(C_1+1) t} f$. Then $g$ is bounded, nonnegative and $g(\cdot, 0)\equiv 0$.  On the other hand,
\begin{equation} \label{box of g}
\left(\ppt {}-\Delta\right) g = -(C_1+1) g + e^{-(C_1+1)t} \left(\ppt {} -\Delta \right) f \le -g.
\end{equation}
If $g$ is positive at some point, Theorem \ref{parabolic OY} implies the existence of a sequence $(x_i, t_i)$ so that
$$ g(x_i, t_i) \to  \sup g , \ \ \ \limsup_{i\to \infty} \left(\ppt{} - \Delta\right) g(x_i, t_i) \ge 0. $$
Take $i\to \infty$ in (\ref{box of g}) gives $0 \le -\sup g$, which contradicts that $g$ is positive somewhere. Thus $g$ and so $f$ is identically zero. This is the same as saying that $\gamma (x, t) \in \Sigma$ for all $(x, t)\in [0,T']$. Note that $T'$ depends only on $C_0$, so we can repeat the same argument finitely many time to conclude that $\gamma(x, t) \in \Sigma$ for all $(x, t) \in M\times [0,T]$.
\end{proof}

\begin{proof}[Proof of Corollary \ref{preserve Lagrangian}]
An immersion is Lagrangian if and only if its Gauss map has image in the Lagrangian Grassmanians $LG(n)$, which is a totally geodesic submanifold of $G(n, n)$. The Corollary follows immediately from Theorem \ref{main theorem}.
\end{proof}

\begin{rem} \label{Neves, Sm}
Various forms of Corollary \ref{preserve Lagrangian} are known to the experts. In \cite{N}, the author comments that the argument used in \cite{Sm} can be generalized to the complete noncompact case, if one assumes the following volume growth condition:
$$\text{Vol} (L_0 \cap B_R(0)) \le C_0 R^n,\ \ \ \text{for some } C_0 >0.$$
The above condition is needed to apply the non-compact maximum principle in \cite{EH}.
\end{rem}

%%%%%%%%%%%%%%%%%%%%%%%%%%%%%%%%%%%%%%%%

\section{Omori-Yau maximum principle for self-shrinkers}
In this section, we improve Theorem 5 in \cite{CJQ} using Theorem \ref{parabolic OY}. The proof is more intuitive in the sense that we use essentially the fact that a self-shrinker is a self-similar solution to the mean curvature flow (possibly after reparametrization on each time slice).

First we recall some facts about self-shrinker. A self-shrinker to the mean curvature flow is an immersion $\widetilde F : M^n \to \mathbb R^{n+m}$ which satisfies
\begin{equation} \label{ss eqn}
\widetilde F^\perp = -\frac 12 \vec H.
\end{equation}
Fix $T_0 \in (-1,0)$. Let $\phi_t : M\to M$ be a family of diffeomorphisms on $M$ so that
\begin{equation}
\phi_{T_0} = \text{Id}_M,\ \ \ppt{} \big( \widetilde F(\phi_t (x))\big) =\frac{1}{2(-t)} \widetilde F^\top(\phi_t(x)),\ \ \ \forall t\in [-1, T_0].
\end{equation}
Let
\begin{equation} \label{MCF defined by ss}
F(x, t) = \sqrt{-t} \widetilde F(\phi_t(x)),\ \ \  (x, t) \in M\times [-1,T_0].
\end{equation}
Then $F$ satisfies the MCF equation since by (\ref{ss eqn}),
\begin{equation*}
\begin{split}
\ppt{F} (x,t) &= \ppt{} \big( \sqrt{-t} \widetilde F (\phi_t(x))\big)  \\
&= -\frac{1}{2\sqrt {-t}} \widetilde F(\phi_t(x) ) + \sqrt{-t} \ppt{} \big( \widetilde F(\phi_t (x))\big) \\
&= -\frac{1}{2\sqrt {-t}} \widetilde F(\phi_t(x) ) +\frac{1}{2\sqrt {-t}} \widetilde F^\top(\phi_t(x) )  \\
&= \frac{1}{\sqrt{-t}} \vec H_{\widetilde F} (\phi_t(x)) \\
&= \vec H_F (x,t).
\end{split}
\end{equation*}
Lastly, recall the $\mathcal L$ operator defined in \cite{CM}:
\begin{equation}
 \mathcal L f = \Delta f - \frac 12 \langle \nabla f, \widetilde F^\top\rangle.
\end{equation}

We are now ready to prove Theorem \ref{OY for self shrinker}:

\begin{proof} [Proof of Theorem \ref{OY for self shrinker}] Recall $T_0 \in (-1,0)$. Let $u : M\times [-1,T_0] \to \mathbb R$ be given by
\begin{equation}
u(x,t) = f(\phi_t(x)),\ \ \ \forall (x, t) \in M \times [-1, T_0].
\end{equation}
Then
$$u(x,t) \le C(1+ |\widetilde F(\phi_t(x)|^\alpha) \le C(-T_0)^{-\alpha/2} |F(x, t)|^\alpha.$$
Thus we can apply Theorem \ref{parabolic OY} (The condition that $u(\cdot, 0)\equiv 0$ in Theorem \ref{parabolic OY} is used only to exclude the case $t_i = -1$. But since
$$ u_i(x,t) = f(\phi_t(x)) - \epsilon_i | \sqrt{-t} \widetilde F(\phi_t(x))|^2,$$
in order that $u_i$ is maximized at $(x_i, t_i)$ we must have $t_i = T_0$. In particular $t_i \neq -1$). Thus there is a sequence $(x_i, T_0)$ so that
$$  u(x_i, T_0) \to \sup u ,\ \ |\nabla^{M_{T_0}} u (x_i, T_0)| \to 0,\ \ \ \liminf_{i\to \infty}\left(\ppt{} - \Delta^{M_{T_0}}\right) u (x_i, T_0) \ge 0 .$$
Using $\phi_{T_0} = \text{Id}$ and the definition of $u$, the first condition gives
\begin{equation}
f(x_i) \to \sup f.
\end{equation}
Since $\nabla^{M_{T_0}} = \frac{1}{\sqrt{-T_0}} \nabla ^{M}$, the second condition gives
\begin{equation}
|\nabla^{M}f (x_i)| \to 0.
\end{equation}
Lastly,
\begin{equation}
\ppt u (x_i,T_0) = \ppt f (\phi_t(x))\bigg|_{t=T_0} = \frac{1}{2(-T_0)} \langle \nabla f(x_i), \widetilde F^\top(x_i)\rangle
\end{equation}
and
\begin{equation*}
\Delta^{M_{T_0}} u(x_i, T_0)= \Delta^{M_{T_0}} f(x_i) = \frac{1}{-T_0} \Delta^M f(x_i).
\end{equation*}
Thus
\begin{equation*}
\left(\ppt{} - \Delta^{M_{T_0}}\right) u (x_i, T_0)  = \frac{1}{T_0} \mathcal L f(x_i)
\end{equation*}
and the result follows.
\end{proof}

\begin{rem}
Note that the above theorem is stronger than Theorem 5 in \cite{CJQ}, where they assume that $f$ is bounded above (which corresponds to our case when $\alpha = 0$).
\end{rem}

\begin{rem}
Our growth condition on $f$ is optimal: the function $f(x)= \sqrt{|x|^2+1}$ defined on $\mathbb R^n$ (as a self-shrinker) has linear growth, but the gradient of $f$
$$\nabla f = \frac{x}{\sqrt{|x|^2+1}}$$
does not tend to $0$ as $f(x)\to \sup f =\infty$.
\end{rem}

\begin{rem}
In Theorem 4 of \cite{CJQ}, the authors also derive a Omori-Yau maximum principle on a properly immersed self-shrinker for the Laplace operator. There they assume $u: M\to \mathbb R$ satisfies the growth condition
$$\lim_{x\to \infty}\frac{u(x)}{\log\left(\sqrt{|\widetilde F(x)|^2+ 4}-1\right)} = 0.$$
We remark that the condition can be weaken to
$$\lim_{x\to \infty} \frac{u(x)}{|\widetilde F(x)|+1} = 0,$$
since the Laplacian of the function $ |\widetilde  F|^2$ satisfies better estimates: $\Delta |\widetilde F|^2 \le 2n$. Thus one can argue as in p.79 in \cite{AMR} to conclude.
\end{rem}

\bibliographystyle{amsplain}

\end{document}